\newtheorem{theorem}{Theorem}[section]
\newtheorem{proposition}[theorem]{Proposition}
\newtheorem{cor}[theorem]{Corollary}
\newtheorem{lemma}[theorem]{Lemma}
\newtheorem{definition}[theorem]{Definition}
\newtheorem{remark}[theorem]{Remark}
\def\theequation{\arabic{section}.\arabic{equation}}
\newcounter{bean}
\newcommand{\benuma}{\setlength{\labelwidth}{.25in}
\begin{list}%
{(\alph{bean})}{\usecounter{bean}}}
\newcommand{\eenuma}{\end{list}}
\newcommand{\la}{\lambda}
\newcommand{\noi}{\noindent}
\newcommand{\bi}{\begin{itemize}}
\newcommand{\ei}{\end{itemize}}
\newcommand{\be}{\begin{enumerate}}
\newcommand{\ee}{\end{enumerate}}
\newcommand{\beqs}{\begin{equation*}}
\newcommand{\eeqs}{\end{equation*}}
\newcommand{\beq}{\begin{equation}}
\newcommand{\eeq}{\end{equation}}
\newcommand{\beqys}{\begin{eqnarray*}}
\newcommand{\eeqys}{\end{eqnarray*}}
\newcommand{\beqy}{\begin{eqnarray}}
\newcommand{\eeqy}{\end{eqnarray}}
\newcommand{\NN}{I\!\!N}
\newcommand{\Z}{\mathbb{Z}}
\newcommand{\R}{I\! \! R}
\newcommand{\ith}{i^{\tiny{\mbox{th}}}}
\newcommand{\ra}{\rightarrow}
\newcommand{\Ra}{\Rightarrow}
\newcommand{\sig}[2]{\sum_{#1}^{#2}}
\newcommand{\inte}[2]{\int_{#1}^{#2}}
\newcommand{\atob}[2]{{#1}^{#2}}
\newcommand{\ab}[3]{{#1}^{#2}_{#3}}
\newcommand{\eps}{\epsilon}
\newcommand{\K}{I\! \! K}
\begin{document}

\title{Non-Markovian state dependent networks in critical loading}

\author{Chihoon Lee\\
Department of Statistics\\
Colorado State University\\
\and Anatolii A. Puhalskii \\
University of Colorado  Denver and \\
Institute for Problems in Information Transmission\\
}

\date{\today}
\maketitle


\begin{abstract}
\noi  We establish heavy traffic limit theorems for
queue-length processes in
 critically loaded single class queueing networks with
state dependent   arrival and service rates.
A distinguishing feature of our model is  non-Markovian state
dependence.   The limit stochastic process
is a continuous-path reflected process on the nonnegative orthant.
We give an application to generalised Jackson networks with
state-dependent rates.
\end{abstract}
\noi {\it Keywords: State dependent networks, non-Markovian networks, diffusion approximation, weak convergence}

\noi {\it   AMS Subject Classifications:} Primary 60F17; secondary 60K25, 60K30, 90B15

\section{Introduction}\label{intro}
Queueing systems with arrival and (or) service rates depending on the system's
state  arise in various application areas which include, among others,
 manufacturing,  storage, service engineering, and communication
and computer networks.
Longer queues may lead to customers being discouraged
 to join the queue, or to faster processing, {e.g.,
   when human servers are involved,}
state-dependent features are  present in congestion
 control protocols in communication networks,  such as TCP
(cf. \cite{ Bek05, ChaSch08, Kushner:2001, MandPat98, Whitt1990} and references
therein for more detail).

In this paper, we
 consider an open network of single server queues
 where the
arrival and service rates depend on the  queue
lengths. More specifically, the network comprises $K$ single-server
stations indexed 1 through $K$\,.    Each station has
an infinite capacity buffer and the jobs are processed according to
the first-in-first-out discipline. The arrivals of jobs at the stations
occur both externally, from the outside and  internally, from the
other stations. Upon service completion at a station, a customer
is either routed to another  station or exits the network.
Any customer entering the
network eventually leaves it.
A distinguishing feature of the model is  non-Markovian state
dependence: the arrival and service processes are assumed to be
time-changed  ``primitive'' processes, where the time change is given by the
integral of the queue-length-dependent rate.

Our goal is to obtain limit theorems in critical loading for the queue
length processes akin to standard
diffusion approximation results available
for generalised Jackson networks, see Reiman \cite{Reiman:1984}.
To this end, we consider a sequence of networks with similar structure
indexed by $n\in\NN$. The limits are taken as $n\to\infty$\,. The
critical loading condition is assumed to hold in the limit.
We show that if  the network primitives satisfy limit theorems with
continuous-path limits,
then the multidimensional
 queue-length processes, when suitably scaled and normalised, converge
 to  a reflected
continuous-path process on  the nonnegative orthant. If the limits of the
primitives are diffusion processes, the limit stochastic
process is a reflected diffusion
with a state dependent  drift and diffusion. We give an application
 to
generalised Jackson networks with state-dependent rates thus providing
an extension of Reiman's \cite{Reiman:1984} results. In addition, we establish
the existence and uniqueness of the solution to the equations
governing the network.

The   results on the  heavy traffic asymptotics in critical loading
for state-dependent rates
 available in the literature
 are mostly confined to the case of diffusion
limits for Markovian models (which assume  state dependent   Poisson arrival
processes and exponential service times), see
 Chapter 8 of Kushner \cite{Kushner:2001},
Mandelbaum and Pats \cite{MandPat98},
  and Yamada
\cite{Yamada95}.   A Markovian closed network with state dependent
rates
 has been considered in
Krichagina \cite{Kri92}.
Some non-Markovian models (with Poissonish
arrivals) have been treated in Section 7 of Yamada \cite{Yamada95}.

A different class of results on diffusion
approximation
 concerns queueing
systems modeled on the many-server queue with a large number of
servers. In such a system the service rate decreases to zero gradually
with the number in the system (whereas in the model considered here it
has a jump at zero, see \eqref{4.3}), so the limit process is an
 unconstrained diffusion, see, Massey,
Mandelbaum, and Reiman \cite{ManMasRei98},
    Pang, Talreja, and Whitt \cite{pang-talreja-whitt}, and references
    therein.
We do not consider those set-ups in this paper.

The exposition is organised as follows.
In the next section, we provide a  precise description of the model
and introduce the  hypotheses required for our main result.
  The statement of the limit theorem for the queue-length process and
  its proof are contained in Section \ref{mainresult}.  In
  Section \ref{example} an application to state-dependent generalised
  Jackson networks is presented. The appendix
 contains a proof of the pathwise queue-length construction underlying
 the definition of the model.

Some notational conventions are in order. We use $\Ra$ to
represent convergence in distribution of random elements with values
in an appropriate metric space,  all vectors are understood as column
vectors, $|x|$ denotes the Euclidean length of
a vector $x$, its components are denoted by $x_i$, unless mentioned
otherwise, superscript $^T$ is used to denote the transpose, $1_A$ stands for the indicator function of an
event $A$\,, $\delta_{ij}$ represents Kronecker's delta, {$\lfloor
a\rfloor$ denotes the integer part of a real number $a$\,.}

\section{The non-Markovian state dependent queueing network}\label{model}
\subsection{The network structure}

   Let $(\Omega,
\mathcal{F}, \mathbf P)$ be a probability space where
  all  random variables  considered in this paper are assumed to be
defined. For the $n$-th network and for
$i\in\K$, where  $\K=\{1,2,\ldots,K\}$, let $A_i^n(t)$ represent  the
cumulative number of customers that arrive at station $i$ from outside
the network during the time interval $[0,t]$, and let $D_i^n(t)$
represent the cumulative number of customers that are served at
station $i$ for the first $t$ units of busy time of that station. Let
$\mathcal{J}\subseteq \K$ represent the set of stations with actual arrivals so
that $A^n_i(t)=0$ if $i\not\in \mathcal{J}$\,.
 We call $A^n=(A_i^n, i\in\K)$ and $D^n=(D^n_i, i\in \K)$, where
 $A^n_i=(A^n_i(t), t\geq0)$ and $D^n_i=(D^n_i(t), t\geq0)$, the
 arrival process and service process for the $n$-th network,
 respectively.  We associate with the stations of the network the
 processes $\Phi_i^n=(\Phi_{ij}^n, j\in \K)$, $i\in\K$, where
 $\Phi^n_{ij}=(\Phi^n_{ij}(m), m=1,2,\ldots)$, and $\Phi^n_{ij}(m)$
 denotes the cumulative number of customers among the first $m$
 customers that depart station $i$ which go directly to station $j$.
 The process $\Phi^n=(\Phi^n_{ij}, i,j\in \K)$ is referred to as the
 routing process.  We consider the processes $A^n_i, D^n_i$ and
 $\Phi^n_i$ as random elements of the respective Skorohod spaces
 $\mathbb{D}([0,\infty),\R), \mathbb{D}([0,\infty),\R)$ and
 $\mathbb{D}([0,\infty),\R^K)$; accordingly, $A^n, D^n$ and $\Phi^n$
 are regarded as random elements of $\mathbb{D}([0,\infty),\R^K),
 \mathbb{D}([0,\infty),\R^K)$ and $\mathbb{D}([0,\infty),\R^{K\times
   K})$, respectively. 
Throughout, $S$ will be used to denote the $K$-dimensional non-negative orthant $\ab{\R}{K}{+}$.

We now introduce the equations which specify  the network.   Let
$\ab{\lambda}{n}{i}$, $\ab{\mu}{n}{i}$, where $i\in\K$,  be Borel
functions mapping  $S$ to $\R_+$,  with $\lambda^n_i(x)=0$ if
$i\not\in\mathcal{J}$, and let $\la^n =
(\la_1^n,\ldots,\la^n_K)$ and
$\mu^n=(\mu^n_1,\ldots,\mu^n_K)$\,. These functions have the meaning
of  state-dependent arrival and service rates.  Let
$\ab{N}{A,n}{i}=(\ab{N}{A,n}{i}(t)\,,t\ge0)$ and
$\ab{N}{D,n}{i}=(\ab{N}{D,n}{i}(t)\,,t\ge0)$
represent nondecreasing $\Z_+$-valued processes with
 trajectories in
$\mathbb{D}([0,\infty),\R)$
 and with $\ab{N}{A,n}{i}(0)=
\ab{N}{D,n}{i}(0)=0$. We define {$
\ab{N}{A,n}{i}(t)=\lfloor t\rfloor$}
if $i\not\in\mathcal{J}$\,. (The latter is but a convenient
convention. Since $\lambda^n_i(x)=0$ if $i\not\in\mathcal{J}$, the
process $N^{A,n}_i$ is immaterial, as the equations below show.)
The state of the network at time $t$ {is represented by
$Q^n(t)=(Q^n_1(t),\ldots, Q^n_K(t))$\,, where $Q^n_i(t)$ represents
the number of customers at station $i$ at time $t$\,. It }is assumed to satisfy a.s. the equations:
\begin{subequations}
  \begin{align}
       Q^n_i(t)=& Q^n_{i}(0)
+ A^n_i(t)+B^n_i(t)-D^n_i(t), \label{4}\\  A_i^n(t)=& \ab{N}{A,n}{i}\left(\inte{0}{t}\ab{\lambda}{n}{i}(\ab{Q}{n}{}(s))ds\right), \label{4.1}\\
B_i^n(t)=& \sig{j=1}{K}\Phi^n_{ji}\bigl(D^n_j(t)\bigr), \label{4.2}
\\ D^n_i(t)=& \ab{N}{D,n}{i}\left(\inte{0}{t}\ab{\mu}{n}{i}(\ab{Q}{n}{}(s))1_{\{\ab{Q}{n}{i}(s)>0\}}ds
\right),\label{4.3}
  \end{align}
\end{subequations}
 where $t\geq0$ and $i\in\K$. The
 random quantities in \eqref{4}--\eqref{4.3} have the following
 interpretation: $Q^n_i(0)\in {\Z_+^K}$ is the initial queue length at station
 $i$; $A^n_i(t), B^n_i(t), D^n_i(t)$ represent the cumulative number of exogenous arrivals at station $i$  during the time interval $[0,t]$, the cumulative number of endogenous arrivals at station $i$  during the time interval $[0,t]$, and the cumulative number of departures from station $i$  during the time interval $[0,t]$, respectively.

\subsection{Assumptions on the network primitives}\label{assumption}

Let $P=(p_{ij},\,i,j\in\K)$ be a substochastic matrix, $R=I-P^T$,
and $p_i=(p_{ij},\,j\in\K)$\,.
We denote
\begin{align*}
\overline{Q}^n(0)=&\frac{Q^n(0)}{\sqrt n},&
 \overline{N}^{A,n}_i(t)=&
\frac{N^{A,n}_i(nt)-nt}{\sqrt n}, \\
\overline{N}^{D,n}_i(t)=&\frac{N^{D,n}_i(nt)-nt}{\sqrt n},&
\overline{\Phi}^n_i(t)=&
\frac{\Phi^n_i({\lfloor nt\rfloor})-p_int}{\sqrt n}\,,\\
 \overline{N}^{A,n}_i=& (\overline{N}^{A,n}_i(t),\,t\ge0),&
\overline{N}^{A,n}=&(\overline{N}^{A,n}_i,\,i\in\K),\\
\overline{N}^{D,n}_i=& (\overline{N}^{D,n}_i(t),\,t\ge0),&
\overline{N}^{D,n}=&(\overline{N}^{D,n}_i,\,i\in\K),\\
\overline{\Phi}^{n}_i=& (\overline{\Phi}^{n}_i(t),\,t\ge0),&
\overline{\Phi}^{n}=&(\overline{\Phi}^{n}_i,\,i\in\K)\,.
\end{align*}
We will need the following conditions.
\begin{itemize}
\item[(A0)] For each $n\in\NN$ and each  $i\in\mathcal{J}$,
 $\limsup_{t\to\infty}N^{A,n}_i(t)/t<\infty$\,a.s.
\item[(A1)] The spectral radius of matrix $P$ is strictly less than 1.

\item[(A2)] For each  $i\in\K$,
$$ \sup_{n\in\NN}\sup_{x\in S}\left(\frac{\lambda^n_i(nx)}{n(1+|x|)} + \frac{\mu_i^n(nx)}{n(1+|x|)} \right)<\infty.$$
\item[(A3)] There exist continuous
functions $\lambda_i(x)$ and $ \mu_i(x)$
such that $$\frac{{\lambda}^n_i(nx)}{{n}}\ra \lambda_i(x),\quad
\frac{{\mu}^n_i(nx)}{{n}}\ra \mu_i(x)$$ uniformly on compact
subsets of $S$, as $n\ra\infty$. Furthermore, for $x \in S$,
$$\lambda(x)-R\mu(x)=0.$$
\item[(A4)]
There exists a {Lipschitz}-continuous function $a(x)$ such that
\begin{equation*}
  \frac{1}{\sqrt{n}}\,(\lambda^n(\sqrt{n}x)-R\mu^n(\sqrt{n}x))\to a(x)
\end{equation*}
as $n\to\infty$ uniformly on compact subsets of $S$.
\item[(A5)] As $n\to\infty$,
\begin{align*}
  (\overline{Q}^n(0),
 \overline{N}^{A,n},
\overline{N}^{D,n},
\overline{\Phi}^n) \Ra(X_0, W^A,W^D, W^\Phi)
\end{align*} where $X_0$ is a  random $K$-vector,
$W^A$,  $W^D$, and $W^\Phi$ are continuous-path stochastic processes
with trajectories in  respective spaces
$\mathbb{D}([0,\infty),\R^K)$, $\mathbb{D}([0,\infty),\R^K)$, and
$\mathbb{D}([0,\infty),\R^{K\times K})$\,.
\end{itemize}
Condition (A0) is  needed to ensure the existence of a unique strong
solution to the system of equations \eqref{4}--\eqref{4.3}, see Lemma
\ref{exist}. It is almost a consequence of condition (A5) in that the latter
implies that $\lim_{n\to\infty}N^{A,n}_i(nt)/(nt)=1$ in probability.
Part (A1) is essentially an assumption  that the network is open.
It  implies  the existence of a  regular Skorohod map associated with
the network data (see Proposition \ref{cond2.1}) which is a key
element of the proof of the main result.   The requirement
$\lambda(x)=R\mu(x)$ in {(A3)} together with condition (A4) defines a
\emph{critically loaded heavy traffic}  regime.  Condition (A5) is the
assumption on the primitives. The components of $W^A$ corresponding to
$i\not\in \mathcal{J}$ vanish.
Conditions (A2)--(A4) are fulfilled if  the following expansions  hold:
$\la^n(x) = n\la_1(x/{n})+\sqrt{n}\la_2(x/{\sqrt n})$ and $\mu^n(x) =
n\mu_1(x/{n})+\sqrt{n}\mu_2(x/{\sqrt n})$, where $\la_1$, $\lambda_2$,
$\mu_1,$ and $\mu_2$ are nonnegative bounded continuous functions such that
$\la_1(x) = R\mu_1(x)$\,. If the above functions are constant, then
one obtains the standard critical loading condition that
$(\lambda^n-R\mu^n)/\sqrt{n}\to\lambda_2-\mu_2$ as $n\to\infty$, cf. Reiman \cite{Reiman:1984}.

Most of the results on diffusion approximation in critical loading
 (see, e.g., Harrison and Reiman
\cite{Harrison:Rieman:1981}, Kushner
\cite{Kushner:2001}) formulate the  heavy traffic condition
in terms of rates that are $\mathcal{O}(1)$ and then consider scaled
processes where time is scaled up by a factor of $n$ while space is
scaled down by a factor of $\sqrt{n}$.  In the scaling considered here
(cf.  Mandelbaum and Pats \cite{MandPat98}, Yamada \cite{Yamada95})
 the time parameter is left
unchanged and the factor of $n$ is absorbed in the arrival and service
rates. This is more convenient notationally, however, in the
application to generalised Jackson networks in Section \ref{example} we
{work with} the conventional scaling.

\begin{lemma}\label{exist}
Let conditions (A0) and (A2) hold. Then equations
\eqref{4}--\eqref{4.3} admit
 a unique strong solution $Q^n$,
which is a $\mathbb Z^K_+$-valued stochastic process.
\end{lemma}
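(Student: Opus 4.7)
The plan is to build $Q^n$ pathwise by induction over the random epochs $0 = t_0 < t_1 < t_2 < \cdots$ at which the processes defined by \eqref{4.1} and \eqref{4.3} next jump, exploiting the fact that between two consecutive such epochs $Q^n$ must stay constant. More precisely, suppose $Q^n$ has been constructed on $[0,t_k]$ with $Q^n\equiv q_k\in\Z_+^K$ on $(t_{k-1},t_k]$. On any continuation interval $[t_k,t)$ on which $Q^n$ remains equal to $q_k$, the clocks appearing inside $N^{A,n}_i$ and $N^{D,n}_i$ grow linearly at rates $\lambda^n_i(q_k)$ and $\mu^n_i(q_k)1_{\{(q_k)_i>0\}}$, so the next jump time of $A^n_i$ (resp. $D^n_i$) is obtained by solving for the first $t>t_k$ at which the linear clock crosses the next jump epoch of $N^{A,n}_i$ (resp. $N^{D,n}_i$) after its current value; with the convention that a zero rate yields $+\infty$, this gives a well-defined candidate time for each $i\in\K$ and each type. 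Take $t_{k+1}$ to be the minimum of these $2K$ candidates, and update $A^n$, $D^n$, and $B^n_i=\sum_j\Phi^n_{ji}(D^n_j(\cdot))$ by the prescribed integer increments; the new value $Q^n(t_{k+1})$ lies in $\Z_+^K$ because the indicator in \eqref{4.3} prevents $D^n_i$ from jumping while $Q^n_i=0$.

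It remains to show that $t_\infty:=\lim_k t_k=\infty$ almost surely. On $[0,t_\infty)$, the routing bound $\sum_i\Phi^n_{ji}(m)\le m$ gives $\sum_i B^n_i(t)\le\sum_i D^n_i(t)$, so summing \eqref{4} over $i$ and using \eqref{4.1} yields
\[
\sum_i Q^n_i(t)\le \sum_i Q^n_i(0) + \sum_i N^{A,n}_i\!\left(\inte{0}{t}\lambda^n_i(Q^n(s))\,ds\right).
\]
Condition (A2) gives $\lambda^n_i(y)\le c_n(1+|y|)$ for $y\in S$, and condition (A0) provides a random constant $c_n'<\infty$ with $N^{A,n}_i(u)\le c_n'(1+u)$ for all $u\ge 0$. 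Substituting produces an inequality of Gronwall type,
\[
\sum_i Q^n_i(t) \le C_1 + C_2\inte{0}{t}\sum_j Q^n_j(s)\,ds, \qquad t\in[0,t_\infty),
\]
with random constants $C_1,C_2<\infty$ depending on $t_\infty$. Gronwall's lemma then yields a uniform a.s.\ bound on $\sum_i Q^n_i(t)$ over $[0,t_\infty)$; hence the two clocks in \eqref{4.1} and \eqref{4.3} stay bounded as $t\uparrow t_\infty$, and since $\Z_+$-valued nondecreasing functions have only finitely many jumps on bounded intervals, only finitely many $t_k$ can lie in $[0,t_\infty)$, contradicting $t_\infty<\infty$.

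Uniqueness is immediate from the construction: any $\Z_+^K$-valued $Q^n$ satisfying \eqref{4}--\eqref{4.3} must agree with the constructed one on $[0,t_1)$, then at $t_1$, and inductively on each $[t_k,t_{k+1})$, since the epochs and increments are uniquely determined by the primitives. The solution is strong because each $t_k$ and each $Q^n(t_k)$ is an explicit functional of $N^{A,n}$, $N^{D,n}$, $\Phi^n$ and the deterministic rate functions. The main obstacle is the non-explosion step: condition (A0) is essential there, as it converts the merely $\Z_+$-valued nature of $N^{A,n}_i$ into the quantitative linear bound needed to close the Gronwall loop against the at most linear growth of $\lambda^n_i$ supplied by (A2).
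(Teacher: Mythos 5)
Your proof is correct, but it follows a genuinely different route from the paper's. The paper adapts the Ethier--Kurtz random-time-change template: it normalises the rates by $\theta^n(x)=1+\sum_{i}(\lambda^n_i(x)+\mu^n_i(x))$ so that the time-changed system \eqref{eq:16} has rates bounded by $1$, builds that system's solution by recursion on jump epochs --- non-explosion being immediate there because its inter-jump gaps dominate those of $\sum_i N^{A,n}_i+\sum_i N^{D,n}_i$ --- and then invokes (A0) and (A2) only to show that the inverse time change $\hat\tau^n$ is defined on all of $[0,\infty)$, via the divergence of the integral in \eqref{eq:14}. You instead construct the solution directly on the original time scale and dispose of explosion by a pathwise Gronwall bound, feeding the a.s.\ linear bound on $N^{A,n}_i$ supplied by (A0) into the linear growth of $\lambda^n_i$ supplied by (A2); the hypotheses thus enter at the analogous step in both arguments, and your bound $\sum_i Q^n_i(t)\le\sum_i Q^n_i(0)+\sum_i N^{A,n}_i(\int_0^t\lambda^n_i(Q^n(s))\,ds)$ is exactly the one the paper itself uses later in Lemma \ref{lem1}. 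Your version is more elementary and self-contained; the paper's reduction to bounded rates cleanly separates the local construction from the growth estimates and conforms to a standard reference. Two small points, shared with the paper's own write-up, that you could make explicit: the recursion advances ($t_{k+1}>t_k$ strictly) because the finite-valued processes $N^{A,n}_i$, $N^{D,n}_i$ have locally finitely many jumps; and the assertion that $Q^n$ stays in $\Z_+^K$ tacitly assumes that a departure jump at a station never exceeds the queue length there (e.g., that the primitives have unit jumps).
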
  The proof is provided in the appendix.  Intuitively, the
linear growth condition (A2) on the total arrival rate, together with
the asymptotic bounds (A0), {implies} that the
sample paths of the $Q^n_i$ are nonexplosive.
\section{Main results}\label{mainresult}
We assume conditions (A0)--(A5) throughout this {section}.
We introduce the ``centered'' processes as follows: For $i \in \K$ and
$t\ge0$,
\begin{subequations}
  \begin{align}
    \label{eq:1}
    \ab{M}{n}{i}(t)=& M^{A, n}_i(t)+M^{B, n}_i(t)-M^{D, n}_i(t),
\intertext{ where }
    \label{eq:1a}  M^{A, n}_i(t)=& \ab{N}{A,n}{i}\left(\inte{0}{t}\ab{\lambda}{n}{i}(\ab{Q}{n}{}(s))ds\right)-\inte{0}{t}\ab{\lambda}{n}{i}(\ab{Q}{n}{}(s))ds, \\    \label{eq:1b}
M^{B, n}_i(t)=&
\sig{j=1}{K}\left(\Phi^n_{ji}\bigl(D^n_j(t)\bigr)-p_{ji}D^n_j(t)\right), \intertext{and}
    \label{eq:1c} M^{D, n}_i(t)=& \ab{N}{D,n}{i}\left(\inte{0}{t}\ab{\mu}{n}{i}(\ab{Q}{n}{}(s))1_{\{\ab{Q}{n}{i}(s)>0\}}ds
\right) -
\inte{0}{t}\ab{\mu}{n}{i}(\ab{Q}{n}{}(s))1_{\{\ab{Q}{n}{i}(s)>0\}}ds\notag\\
+&\sum_{j=1}^Kp_{ji}\Bigl(\ab{N}{D,n}{j}\left(\inte{0}{t}\ab{\mu}{n}{j}(\ab{Q}{n}{}(s))1_{\{\ab{Q}{n}{j}(s)>0\}}ds
\right) -
\inte{0}{t}\ab{\mu}{n}{j}(\ab{Q}{n}{}(s))1_{\{\ab{Q}{n}{j}(s)>0\}}ds\Bigr)\,.
  \end{align}
\end{subequations}

We can rewrite the evolution \eqref{4} as \beqs\label{121} \ab{Q}{n}{i}(t)=
\ab{Q}{n}{i}(0) + \inte{0}{t}
\left[\ab{\lambda}{n}{i}(\ab{Q}{n}{}(s))+ \sig{j=1}{K} p_{ji}
\ab{\mu}{n}{j}(\ab{Q}{n}{}(s)) - \ab{\mu}{n}{i}(\ab{Q}{n}{}(s))
\right]ds +  \ab{M}{n}{i}(t) + [R \ab{Y}{n}{}(t)]_i,
\eeqs where  $Y^n(t)=(Y^n_i(t)\,,i\in\K)$
and \begin{equation}
  \label{eq:11}
\ab{Y}{n}{i}(t)= \inte{0}{t}1_{\{\ab{Q}{n}{i}(s)=0\}}
\ab{\mu}{n}{i}(\ab{Q}{n}{}(s))\,ds, \quad i\in\K,
\end{equation}
Note that $(\ab{Y}{n}{i}(t), t\geq0)$ is a continuous-path
non-decreasing process with $Y^n_i(0)=0$, which  increases only when $\ab{Q}{n}{i}(t)=0$, i.e., $\inte{0}{\infty}1_{\{\ab{Q}{n}{i}(t)\neq0\}}d\ab{Y}{n}{i}(t)=0$ a.s.   Set
\begin{equation}
  \label{eq:2}
a^n(x)= \ab{\lambda}{n}{}(x)-R\ab{\mu}{n}{}(x).
\end{equation}
Then the state evolution can be expressed succinctly by the following vector equation: \beq\label{vec11} \ab{Q}{n}{}(t) = \ab{Q}{n}{}(0) + \inte{0}{t}\ab{a}{n}{}(\ab{Q}{n}{}(s))ds + \ab{M}{n}{}(t) + R\ab{Y}{n}{}(t), \quad t\geq0. \eeq The latter dynamic can equivalently be described in terms of a Skorohod map as described below.

\begin{definition}\label{SP} Let $\psi\in \mathbb{D}([0,\infty),\R^K)$
  be given with $\psi(0)\in S$. Then the pair
$(\phi,\eta)\in \mathbb{D}([0,\infty), \R^K)\times \mathbb{D}([0,\infty),\R^K)$ solves the Skorohod problem for $\psi$ with respect to $S$ and $R$ if  the following hold:
\begin{itemize}
\item [\rm{(i)}] $\phi(t)=\psi(t)+R\eta(t)\in S$, for all $t\geq0;$
\item[\rm{(ii)}]  for $i\in\K,$
\rm{(a)} $\eta_i(0)=0,$ \rm{(b)} $\eta_i$ is non-decreasing, and
\rm{(c)} $\eta_i$ can increase only when $\phi$ is on the $\ith$
face of $S$, that is,
$\int_0^{\infty}1_{\{\phi_i(s)\neq0\}}d\eta_i(s)=0.$
\end{itemize}
\end{definition}
\noi 
Let $\mathbb{D}_S([0,\infty),\R^K)=\{\psi\in 
\mathbb{D}([0,\infty),\R^K):\psi(0)\in S\}$.
{If the Skorohod problem has a unique solution on a
  domain $D\subset
\mathbb{D}_S([0,\infty),\R^K)$,
we define the Skorohod map $\Gamma$ on $D$} by $${\rm{\Gamma}}(\psi)= \phi\,.$$
The following result (see Dupuis and Ishii \cite{Dupuis:Ishii:1991},
Harrison and
Reiman \cite{Harrison:Rieman:1981})
yields the regularity
of the Skorohod map and is a consequence of Assumption (A1).
\begin{proposition}\label{cond2.1}
{The Skorohod map $\Gamma$ is well defined on 
$\mathbb{D}_S([0,\infty),\R^K)$ and} is Lipschitz continuous in the following sense:
There exists a constant $L>0$ such that for all $T>0$ and
 $\psi_1,\psi_2 \in \mathbb{D}_S([0,\infty),\R^K)$,
$$\label{sm}\sup_{t\in[0,T]}|{\rm{\Gamma}}(\psi_1)(t)-{\rm{\Gamma}}(\psi_2)(t)|
\le L\sup_{t\in[0,T]}|\psi_1(t)-\psi_2(t)|.$$
\end{proposition}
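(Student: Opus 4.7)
The plan is to derive both the existence/uniqueness of the Skorohod problem and the Lipschitz estimate directly from assumption (A1) via the classical Harrison--Reiman fixed-point construction; the more general result of Dupuis and Ishii then applies as a special case. The first thing to observe is that (A1) makes $R=I-P^T$ a nonsingular $M$-matrix: the Neumann series $\sig{k=0}{\infty}(P^T)^k$ converges to $R^{-1}$ because $\rho(P^T)<1$, and its entries are all nonnegative since $P$ has nonnegative entries. Standard Perron--Frobenius arguments then yield a strictly positive vector $u\in\R^K$ and a constant $\theta\in(0,1)$ with $P^T u\le\theta u$ componentwise, and this vector furnishes a weighted norm $|x|_u:=\max_i |x_i|/u_i$ in which $P^T$ is a strict contraction.

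Next I would reformulate the Skorohod problem as a fixed-point equation for $\eta$. Imposing (i)--(ii) of Definition \ref{SP} on $\phi=\psi+R\eta=\psi+\eta-P^T\eta$ yields, for each $i\in\K$,
\[
\eta_i(t)=\sup_{0\le s\le t}\bigl((P^T\eta(s))_i-\psi_i(s)\bigr)^+.
\]
Define the operator $\Psi$ on the cone of $\R_+^K$-valued, componentwise nondecreasing elements of $\mathbb{D}([0,T],\R^K)$ vanishing at zero by letting $(\Psi\eta)_i(t)$ equal the right-hand side above. Using the elementary estimate $|\sup a-\sup b|\le\sup|a-b|$ componentwise, one obtains $\|\Psi\eta-\Psi\tilde\eta\|_{T,u}\le\theta\|\eta-\tilde\eta\|_{T,u}$ in the norm $\|\cdot\|_{T,u}:=\sup_{t\le T}|\cdot(t)|_u$. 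Banach's fixed-point theorem then produces a unique $\eta$, and $\phi:=\psi+R\eta$ is readily checked to satisfy (i)--(ii). Since $T$ is arbitrary, $\Gamma$ is well defined on all of $\mathbb{D}_S([0,\infty),\R^K)$.

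For the Lipschitz bound I would apply the same contraction inequality to two inputs $\psi_1,\psi_2$ with associated reflection terms $\eta_1,\eta_2$. Subtracting the fixed-point equations gives
\[
\|\eta_1-\eta_2\|_{T,u}\le\theta\|\eta_1-\eta_2\|_{T,u}+\|\psi_1-\psi_2\|_{T,u},
\]
hence $\|\eta_1-\eta_2\|_{T,u}\le(1-\theta)^{-1}\|\psi_1-\psi_2\|_{T,u}$. Because $\phi_j=\psi_j+R\eta_j$, this yields a Lipschitz bound on $\Gamma(\psi_j)$ in the weighted norm with a constant independent of $T$, and the equivalence $c|x|\le|x|_u\le C|x|$ converts it into the Euclidean estimate stated in the proposition.

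The crux of the matter, and the only place where (A1) is needed, is the choice of the weighted norm under which $P^T$ becomes a strict contraction; once that is fixed, the remaining computation is routine. For a streamlined presentation it would suffice to verify the hypotheses of \cite{Harrison:Rieman:1981} or of the completely-$\mathcal{S}$ condition in \cite{Dupuis:Ishii:1991} and cite those works directly.
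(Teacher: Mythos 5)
Your argument is correct and is exactly the approach the paper relies on: the paper offers no proof of Proposition \ref{cond2.1}, instead citing \cite{Harrison:Rieman:1981} and \cite{Dupuis:Ishii:1991}, and your weighted-norm contraction/fixed-point construction is precisely the content of the former (your observation that (A1) yields $u>0$ and $\theta<1$ with $P^Tu\le\theta u$ is the key point, and the Lipschitz bound with constant independent of $T$ follows as you say). The only step deserving one more line is the ``readily checked'' claim that the fixed point $\eta$ is componentwise nondecreasing and that the complementarity $\int_0^\infty 1_{\{\phi_i(s)\neq 0\}}\,d\eta_i(s)=0$ persists for c\`adl\`ag inputs $\psi$, which is the standard property of the one-dimensional reflection map applied to $\psi_i-(P^T\eta)_i$.
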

{As a consequence, both $\phi$ and $\eta$ are
  continuous functions of $\psi$. (Note that matrix $R$ is invertible
  under the hypotheses.)}

\noi The dynamic in \eqref{vec11} can now be equivalently described
in terms of the Skorohod map as follows:
\beq\label{2.7st}
Q^n(t)={\rm{\Gamma}}\left(Q^n(0)+\inte{0}{\cdot}\ab{a}{n}{}(\ab{Q}{n}{}(s))ds + \ab{M}{n}{}(\cdot)\right)(t), \quad \mbox{for } t\geq0.\eeq

The Lipschitz continuity of the Skorohod map
and of the function $a(x)$ imply that the equation
\beq\label{sde}X(t)=\Gamma\bigl( X_0 +   \inte{0}{\cdot}\ab{a}{}{}(\ab{X}{}{}(s))ds +
 M(\cdot)\bigr)(t), \eeq
where
\begin{equation}
  \label{eq:3}
M_i(t)=W^A_i(\lambda_i(0)t)+\sum_{j=1}^KW^\Phi_{ji}(\mu_j(0)t)
-\sum_{j=1}^K(\delta_{ij}-p_{ji})W^D_j(\mu_j(0)t)\,,
\end{equation}
has a unique strong {solution}.

We now state the main result of this paper.
For $t\geq0$ and $i\in \K$, let  $\atob{X}{n}_i(t)=Q^n_i(t)/\sqrt n$.
We also define $X=(X(t),\,t\ge0)$,  and
$X^n=((X^n_i(t),\,i=1,2,\ldots,{K}),\,t\ge0)$\,.
\begin{theorem}\label{main}
Let conditions (A0)--(A5) hold. Then
 $\atob{X}{n}\Ra X$, as $n\to\infty$\,.
\end{theorem}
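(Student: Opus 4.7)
The plan is to rescale equation \eqref{2.7st} by $\sqrt n$ and pass to the limit using the Lipschitz continuity of the Skorohod map $\Gamma$ together with a random-time-change argument for the centered primitive processes. Since $\Gamma$ is positively homogeneous, dividing \eqref{2.7st} by $\sqrt n$ yields
$$X^n(t)=\Gamma\!\left(\overline{Q}^n(0)+\int_0^{\cdot}\overline{a}^n(X^n(s))\,ds+\overline{M}^n(\cdot)\right)(t),$$
where $\overline{a}^n(x):=a^n(\sqrt n\,x)/\sqrt n$ and $\overline{M}^n(t):=M^n(t)/\sqrt n$. By (A4), $\overline{a}^n\to a$ uniformly on compact subsets of $S$, with the limit $a$ Lipschitz; the target equation \eqref{sde} has the same Skorohod-type form, which makes the continuous mapping theorem the natural tool for the final step.

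The central technical task is to show $\overline{M}^n\Rightarrow M$ (with $M$ as in \eqref{eq:3}) jointly with the primitives of (A5). For the arrival component, setting $\tau^{A,n}_i(t):=\int_0^t\lambda^n_i(Q^n(s))\,ds$, I rewrite
$$\overline{M}^{A,n}_i(t)=\overline{N}^{A,n}_i\!\bigl(\tau^{A,n}_i(t)/n\bigr).$$
On any event where $\sup_{s\le T}|X^n(s)|$ is bounded, (A3) and the continuity of $\lambda_i$ at $0$ yield
$$\frac{\tau^{A,n}_i(t)}{n}=\int_0^t\frac{\lambda^n_i\bigl(n\cdot X^n(s)/\sqrt n\bigr)}{n}\,ds\longrightarrow \lambda_i(0)\,t$$
uniformly on $[0,T]$, since $X^n(s)/\sqrt n\to 0$. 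The continuous random time change theorem together with (A5) then delivers $\overline{M}^{A,n}_i\Rightarrow W^A_i(\lambda_i(0)\,\cdot)$. The treatment of $\overline{M}^{D,n}_i$ is analogous; the indicator $1_{\{Q^n_i(s)>0\}}$ in \eqref{eq:1c} is immaterial in the limit because $Y^n_i(t)/n\to 0$ in probability, $Y^n_i/\sqrt n$ being kept bounded via the Skorohod representation. Finally $\overline{M}^{B,n}_i$ is handled by invoking (A5) for $\overline{\Phi}^n$ with the departure counts $D^n_j/n$, whose fluid limits are $\mu_j(0)\,t$, as the inner time change.

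To resolve the circular dependence between tightness of $X^n$ and convergence of these time changes, I plan to localize with the stopping time $\sigma^n_N:=\inf\{t:|X^n(t)|\ge N\}$. On $[0,T\wedge\sigma^n_N]$ the growth bound (A2) gives $|\overline{a}^n(x)|\le C(1+|x|)$ and $\tau^{A,n}_i(t)/n\le C\int_0^t(1+|X^n(s)|)\,ds$; combined with Proposition \ref{cond2.1} and Gronwall's inequality, this controls $\sup_{t\le T\wedge\sigma^n_N}|X^n(t)|$ in probability and forces $\mathbf P(\sigma^n_N\le T)\to 0$ as $N\to\infty$ uniformly in $n$, simultaneously giving $C$-tightness of $X^n$ and of the time changes. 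Along any weakly convergent subsequence $X^n\Rightarrow\tilde X$, the continuous mapping theorem applied to $\Gamma$, together with $\overline{a}^n\to a$ and the Lipschitz property of $a$, identifies $\tilde X$ as a solution of \eqref{sde}; the strong uniqueness for \eqref{sde} (already recorded in the excerpt) upgrades subsequential convergence to full convergence. The main obstacle is precisely this interlocked bookkeeping step: keeping joint control of $X^n$, of the reflection terms $Y^n/\sqrt n$, and of the state-dependent time changes $\tau^{A,n}_i/n$ and $\tau^{D,n}_i/n$; once that is in place, the remainder reduces to standard applications of the random time change theorem and the continuity of $\Gamma$.
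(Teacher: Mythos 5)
Your proposal is correct and rests on the same core machinery as the paper's proof: the Skorohod-map representation \eqref{eq:12}, convergence of the centred processes $\overline{M}^n$ to $M$ via the random time change theorem and (A5), a Gronwall estimate for stochastic boundedness, and identification of subsequential limits through the continuity of $\Gamma$ and uniqueness for \eqref{sde}. The genuine difference is in how the intermediate facts are organised. The paper runs a two-stage argument: Lemma \ref{lem1} gets $\mathbb{C}$-tightness of $M^n/\sqrt{n}$ from the a priori fluid-scale bound $\sum_i Q^n_i(t)\le\sum_i Q^n_i(0)+\sum_i A^n_i(t)$ and Gronwall, and Lemma \ref{lem2} then shows that the fluid limit of $Q^n/n$ satisfies $q=\Gamma\bigl(\int_0^{\cdot}(\lambda(q(s))-R\mu(q(s)))\,ds\bigr)$ and so vanishes by the identity $\lambda(x)=R\mu(x)$ of (A3), from which the time changes converge to $\lambda_i(0)t$ and $\mu_i(0)t$. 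You instead work entirely on the diffusion scale and localise with $\sigma^n_N$: on the stopped interval $|Q^n(s)|/n\le N/\sqrt{n}\to0$ automatically, so the fluid limit comes for free and the identity in (A3) is never invoked --- criticality enters only through (A4). Both routes are sound. The paper's avoids stopping times at the cost of a separate fluid-limit lemma; yours avoids that lemma and, as a side benefit, makes the linear-growth bound on $n^{-1/2}a^n(\sqrt{n}\,x)$ unimpeachable, since (A2) alone only gives $n^{-1/2}|a^n(\sqrt{n}x)|\le C(\sqrt{n}+|x|)$ and the needed bound $C(1+|x|)$ really comes from (A4) on the compact set $\{|x|\le N\}$ supplied by your localisation. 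When writing this up, do record that the time-change argument is applied jointly to all the primitives in (A5) (so that the limit $M$ is the specific combination \eqref{eq:3}) and that the tightness of the inner time changes is established on the stopped intervals before the localisation is released.
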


To prove this theorem, we first establish certain tightness
results.  Recall that a
sequence $V^n$ of stochastic processes with trajectories in a Skorohod
space is said to be  $\mathbb C$-tight if the sequence of the laws of
the $V^n$ is tight,
and if all limit points of the sequence of the laws of the $V^n$
 are laws of continuous-path processes
 (see, e.g., Definition 3.25 and Proposition 3.26 in Chapter VI of
Jacod and Shiryaev {\cite{MR2003j:60001}}).

\begin{lemma}\label{lem1}
The sequence of processes $(M^{n}(t)/\sqrt{n},\,t\ge0)$ is $\mathbb{C}$-tight.
\end{lemma}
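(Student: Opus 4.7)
The plan is to express each centred component of $M^n$ as a random time change of the corresponding centred primitive, invoke the $\mathbb C$-tightness of the primitives from (A5), and close the argument by a stopping-time bootstrap against the Skorohod representation \eqref{2.7st}.

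For $i\in\K$, write
\[
\frac{M^{A,n}_i(t)}{\sqrt n}=\overline N^{A,n}_i\bigl(\tau^{A,n}_i(t)\bigr),\qquad \tau^{A,n}_i(t)=\frac1n\int_0^t\lambda^n_i(Q^n(s))\,ds,
\]
and analogously express $M^{D,n}_i/\sqrt n$ and $M^{B,n}_i/\sqrt n$ via the centred primitives $\overline N^{D,n}_j$ and $\overline\Phi^n_{ji}$ composed with time changes driven by $\mu^n_j(Q^n(s))/n$ (indicators included) and by the scaled departure processes. By (A5) each primitive converges in distribution to a continuous-path process and is therefore $\mathbb C$-tight.

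Introduce the stopping time $\tau^n_K=\inf\{t\ge 0:|X^n(t)|>K\}$. On $[0,\tau^n_K]$, assumption (A2) yields $\lambda^n_i(Q^n(s))/n\le C(1+K/\sqrt n)\le 2C$ for $n$ large, together with the analogous bound for $\mu^n_j$. Hence every time change is bounded by $2CT$ on $[0,T\wedge\tau^n_K]$ and is Lipschitz with a deterministic constant. Composition of a $\mathbb C$-tight sequence with such uniformly Lipschitz continuous maps with bounded range preserves $\mathbb C$-tightness, because the jumps of each composed process are of order $1/\sqrt n$ and its modulus of continuity is controlled by that of the primitive on a fixed compact interval. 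Thus each stopped component, and so $M^n(\cdot\wedge\tau^n_K)/\sqrt n$, is $\mathbb C$-tight on every $[0,T]$, and
\[
\sup_{t\le T\wedge\tau^n_K}\frac{|M^n(t)|}{\sqrt n}\le V_1,
\]
with $V_1$ an $O_P(1)$ random variable controlled by the suprema of the limits in (A5) on fixed compact intervals, uniformly in $K$.

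Inserting this bound into the Skorohod representation \eqref{2.7st}, using the Lipschitz continuity of $\Gamma$ from Proposition \ref{cond2.1} together with the critical-loading estimate $|a^n(\sqrt n x)|/\sqrt n\le |a(0)|+L_a|x|+1$ for $|x|\le K$ and $n$ large, provided by (A4), Gronwall's inequality gives
\[
\sup_{t\le T\wedge\tau^n_K}|X^n(t)|\le V_2,
\]
with $V_2=O_P(1)$ depending on $T$ and $V_1$ but not on $K$. Since $|X^n(\tau^n_K)|\ge K$ on $\{\tau^n_K\le T\}$, one has $\mathbf P(\tau^n_K\le T)\le\mathbf P(V_2\ge K)\to 0$ as $K\to\infty$, uniformly in large $n$. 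Given $\epsilon>0$, choosing $K$ so that $\mathbf P(\tau^n_K\le T)<\epsilon$ for all large $n$, the process $M^n/\sqrt n$ on $[0,T]$ coincides with the $\mathbb C$-tight stopped version with probability exceeding $1-\epsilon$, yielding $\mathbb C$-tightness on $[0,T]$ and hence on $[0,\infty)$.

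The main obstacle is the self-referential character of these estimates: controlling $M^n/\sqrt n$ through the time-change representation requires a bound on $Q^n$, whereas the Skorohod bound on $Q^n$ requires a bound on $M^n$. The stopping time $\tau^n_K$ breaks the circularity because on $[0,\tau^n_K]$ the inequality $|Q^n|/n\le K/\sqrt n$, combined with the linear-growth assumption (A2), produces a $K$-free bound $V_1$ on $M^n/\sqrt n$; the critical-loading hypothesis (A4) then lets Gronwall close the loop on $X^n$.
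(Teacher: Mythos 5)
Your proof is correct, but it takes a genuinely different route from the paper's. The paper never localises at the diffusion scale: it first derives an a priori bound on the fluid-scaled queue, $\sup_{s\le t}\sum_{i}Q^n_i(s)/n$, directly from the crude pathwise inequality $\sum_i Q^n_i(t)\le\sum_i Q^n_i(0)+\sum_i A^n_i(t)$ combined with the linear growth in (A2), the law-of-large-numbers behaviour of $N^{A,n}$ from (A5), and Gronwall's inequality (this is \eqref{eq:10}); that bound makes the integrated rates $\frac1n\int_0^t(\lambda^n_i+\mu^n_i)(Q^n(s))\,ds$ stochastically bounded with uniformly small increments, and the $\mathbb{C}$-tightness of $M^n/\sqrt n$ then follows from the same time-change/modulus-of-continuity decomposition you use, with a truncation level $r$ sent to infinity after $n$ instead of a stopping time. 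Your argument bootstraps instead at the diffusion scale through $\tau^n_K$, the Skorohod representation \eqref{2.7st} and (A4). Both are sound; the trade-off is that the paper's version uses only (A2) and (A5) and its fluid bound is reused in Lemma \ref{lem2}, whereas yours additionally invokes (A4) and Proposition \ref{cond2.1} (harmless, since (A0)--(A5) are in force throughout the section) but yields as a by-product the stochastic boundedness of $X^n=Q^n/\sqrt n$, which the paper only obtains inside the proof of Theorem \ref{main} by essentially your Gronwall--Skorohod step. Two details to tighten: the time change feeding $\overline{\Phi}^n_{ji}$ is $D^n_j(\cdot)/n$, a pure-jump process, so it is not ``Lipschitz with a deterministic constant''; one needs the intermediate observation that on $[0,\tau^n_K]$ its increments are dominated by those of $N^{D,n}_j(n\cdot)/n$ over a fixed compact with window $2C\delta$, which by (A5) tend to $2C\delta$. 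Also, the bound $\lambda^n_i(Q^n(s))/n\le 2C$ and the estimate from (A4) hold only for $n\ge n_0(K)$, so $\mathbf P(\tau^n_K\le T)\le\mathbf P(V_2\ge K)$ must be read as holding for $n$ large depending on $K$, with the limits taken in the order $n\to\infty$ then $K\to\infty$ --- which is indeed how your final step proceeds.
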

\begin{proof}
By \eqref{4} -- \eqref{4.3},
\begin{multline*}
\sum_{i=1}^K Q^n_i(t)\le\sum_{i=1}^K Q^n_i(0)+\sum_{i=1}^K A^n_i(t)=
\sum_{i=1}^K Q^n_i(0)+\sum_{i=1}^K \ab{N}{A,n}{i}\left(\inte{0}{t}\ab{\lambda}{n}{i}(\ab{Q}{n}{}(s))ds\right)\,.
\end{multline*}
Therefore, for suitable $H>0$, on recalling (A2) and denoting
$Z^n_i(t)=Q^n_i(t)/n$,
\begin{multline*}
  \sum_{i=1}^K Z^n_i(t)\le
\sum_{i=1}^K Z^n_i(0)+
\sum_{i=1}^K
\sup_{y\ge n}
\frac{1}{y}\,\ab{N}{A,n}{i}(y)
\left(1+\inte{0}{t
}\frac{1}{n}\ab{\lambda}{n}{i}(n\ab{Z}{n}{}(s))ds\right)\\\le
\sum_{i=1}^K Z^n_i(0)+
\sum_{i=1}^K
\sup_{y\ge n}
\frac{1}{y}\,\ab{N}{A,n}{i}(y)
\left(1+H\inte{0}{t}(1+\sum_{i=1}^K\ab{Z}{n}{i}(s))\,ds\right){.}
\end{multline*}
By  Gronwall's inequality (cf. p.498 in
Ethier and Kurtz \cite{EthKur86}),
\begin{equation*}
  \sum_{i=1}^K Z^n_i(t)\le
\bigl(\sum_{i=1}^K Z^n_i(0)+
\sum_{i=1}^K \sup_{y\ge n}\frac{1}{y}\,\ab{N}{A,n}{i}(y)
(1+Ht)\bigr)\exp\bigl(H
\sum_{i=1}^K
\sup_{y\ge n}
\frac{1}{y}\,\ab{N}{A,n}{i}(y)t
\bigr){.}
\end{equation*}
By (A5), $\ab{N}{A,n}{i}(y)/y\to 1$ in probability as $y\to\infty$ and
$n\to\infty$ and $\sum_{i=1}^K Z^n_i(0)\to0$ in probability as $n\to\infty$\,. Therefore,
\begin{equation}
  \label{eq:10}
      \lim_{r\to\infty}\limsup_{n\to\infty}\mathbf{P}(\sup_{s\le t}\sum_{i=1}^K Z^n_i(s)>r)=0\,.
\end{equation}
It follows by (A2) that
\begin{equation}
  \label{eq:8}
    \lim_{r\to\infty}\limsup_{n\to\infty}\mathbf{P}(\int_0^t
\bigl(\frac{1}{n}\,\lambda^n_i(Q^n_i(s))
+\frac{1}{n}\,\mu^n_i(Q^n_i(s))\bigr)\,ds>r)=0
\end{equation}
and that, for $\delta>0$, $\epsilon>0$, $T>0$,
\begin{equation}
  \label{eq:9}
  \lim_{\delta\to0}\limsup_{n\to\infty}
\mathbf{P}\bigl(\sup_{t\in[0,T]}\int_{t}^{t+\delta}
\bigl(\frac{1}{n}\,\lambda^n_i(Q^n_i(s))
+\frac{1}{n}\,\mu^n_i(Q^n_i(s))\bigr)\,ds>\epsilon\bigr)=0\,.
\end{equation}

We have that, for $\gamma>0$, $\delta>0$, $\epsilon>0$, $T>0$, and $r>0$,
\begin{multline*}
  \mathbf{P}(\sup_{\substack{s,t\in[0,T]:\\
|s-t|\le \delta}}|\frac{1}{\sqrt{n}}M^{A,n}_i(t)-
\frac{1}{\sqrt{n}}M^{A,n}_i(s)|>\gamma)\le
\mathbf{P}\bigl(\int_0^T\frac{1}{n}\,\lambda^n_i(Q^n_i(s))
\,ds>r\bigr)\\+
\mathbf{P}\bigl(\sup_{t\in[0,T]}\int_{t}^{t+\delta}
\frac{1}{n}\,\lambda^n_i(Q^n_i(s))
\,ds>\epsilon\bigr)+
  \mathbf{P}(\sup_{\substack{s,t\in[0,r]:\\
|s-t|\le \epsilon}}|\overline{N}^{A,n}_i(t)-
\overline{N}^{A,n}_i(s)|>\gamma)\,.
\end{multline*}
By (A5), \eqref{eq:8}, and \eqref{eq:9},
\begin{equation*}
\lim_{\delta\to0}\limsup_{n\to\infty}  \mathbf{P}(\sup_{\substack{s,t\in[0,T]:\\
|s-t|\le \delta}}|\frac{1}{\sqrt{n}}M^{A,n}_i(t)-
\frac{1}{\sqrt{n}}M^{A,n}_i(s)|>\gamma)=0\,.
\end{equation*}
Hence, the sequences of processes $(M^{A,n}_i(t)/\sqrt{n},\,t\ge0)$ are $\mathbb{C}$-tight. A similar argument shows that the sequences of processes $(M^{D,n}_i(t)/\sqrt{n},\,t\ge0)$ and $(M^{\Phi,n}_i(t)/\sqrt{n},\,t\ge0)$ are $\mathbb{C}$-tight, so the sequence of processes $(M^{n}(t)/\sqrt{n},\,t\ge0)$ is $\mathbb{C}$-tight.
\end{proof}

\noindent Next, we identify the limit points of
$\overline{M}^{n}=(M^n(t)/\sqrt{n},\,t\ge0)$.
\begin{lemma}\label{lem2}
The sequence of processes $\overline{M}^{n}$ converges in
distribution, as $n\to\infty$, to $M$\,.
\end{lemma}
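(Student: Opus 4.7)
My plan is to combine the $\mathbb{C}$-tightness of $\overline{M}^n$ from Lemma \ref{lem1} with a random-time-change identification, using the auxiliary fluid-scale convergence $Q^n/n\Ra 0$ as a prerequisite. Passing to a subsequence, $\overline{M}^n$ converges jointly with the primitives (by Lemma \ref{lem1} and (A5)), so it suffices to identify any subsequential limit as $M$ in \eqref{eq:3}.

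\emph{Step 1 (fluid limit).} Set $Z^n=Q^n/n$. Positive homogeneity of the Skorohod map $\Gamma$ applied to \eqref{2.7st} gives
\beqs
Z^n(t)=\Gamma\Bigl(Z^n(0)+\int_0^{\cdot}\frac{(\lambda^n-R\mu^n)(Q^n(s))}{n}\,ds+\frac{M^n(\cdot)}{n}\Bigr)(t).
\eeqs
Each term inside $\Gamma$ tends to zero in probability, uniformly on compacts: $Z^n(0)=\overline{Q}^n(0)/\sqrt n\Ra 0$ by (A5); $M^n/n=\overline{M}^n/\sqrt n\Ra 0$ by Lemma \ref{lem1}; and $(\lambda^n-R\mu^n)(nx)/n\to(\lambda-R\mu)(x)=0$ uniformly on compact $x$-sets by (A3), which together with the stochastic boundedness \eqref{eq:10} (localizing $Z^n$ on compacts) forces the drift integral to zero. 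The Lipschitz property of $\Gamma$ (Proposition \ref{cond2.1}) then yields $Z^n\Ra 0$, and the parallel argument for the regulator gives $Y^n/n\Ra 0$.

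\emph{Step 2 (random time change).} Write $\tau^{\lambda,n}_i(t)=\int_0^t\lambda^n_i(Q^n(s))\,ds$ and $\tau^{\mu,n}_j(t)=\int_0^t\mu^n_j(Q^n(s))1_{\{Q^n_j(s)>0\}}\,ds$. Using (A3) together with $Z^n\Ra 0$ (respectively with $Y^n/n\Ra 0$), I obtain $\tau^{\lambda,n}_i(\cdot)/n\Ra \lambda_i(0)\,\cdot$ and $\tau^{\mu,n}_j(\cdot)/n\Ra \mu_j(0)\,\cdot$, and hence $D^n_j(\cdot)/n\Ra \mu_j(0)\,\cdot$ via the decomposition $D^n_j=\int_0^{\cdot}\mu^n_j(Q^n(s))1_{\{Q^n_j(s)>0\}}\,ds + \tilde M^{D,n}_j$ divided by $n$. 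The defining identity $M^{A,n}_i(t)/\sqrt n=\overline{N}^{A,n}_i(\tau^{\lambda,n}_i(t)/n)$ and its analogues for $M^{B,n}_i$ (through $\overline{\Phi}^n_{ji}(D^n_j(t)/n)$) and $M^{D,n}_i$ (through $\overline{N}^{D,n}_j(\tau^{\mu,n}_j(t)/n)$), combined with the joint continuous-path convergence (A5) and continuity of composition with a deterministic continuous time change (e.g., Billingsley, Theorem 3.9), give
\beqs
\overline{M}^{A,n}_i\Ra W^A_i(\lambda_i(0)\,\cdot),\ \ \overline{M}^{B,n}_i\Ra\sum_{j=1}^K W^\Phi_{ji}(\mu_j(0)\,\cdot),\ \ \overline{M}^{D,n}_i\Ra\sum_{j=1}^K(\delta_{ij}-p_{ji})W^D_j(\mu_j(0)\,\cdot).
\eeqs
Since the limiting time changes are deterministic, joint convergence is inherited from the joint convergence of $(\overline{N}^{A,n},\overline{N}^{D,n},\overline{\Phi}^n)$, and summing yields $\overline{M}^n\Ra M$ as in \eqref{eq:3}.

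The main obstacle is the fluid step $Z^n\Ra 0$: it is what collapses $\lambda^n(Q^n)/n$ to the constant $\lambda(0)$ and replaces the data-dependent time changes by the deterministic linear ones needed for a clean composition argument. It relies crucially on the critical-loading identity $\lambda=R\mu$ in (A3) (otherwise the fluid limit would be non-trivial) and on feeding back the tightness from Lemma \ref{lem1} to control $M^n/n$. Once this is in hand, the remainder is standard continuous-mapping and time-change bookkeeping.
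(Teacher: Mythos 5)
Your proposal is correct and follows essentially the same route as the paper: establish the fluid limit $Q^n/n\Rightarrow 0$ (and $Y^n/n\Rightarrow 0$) via the Skorohod-map representation, the bound $M^n/n\to 0$ from Lemma \ref{lem1}, and the critical-loading identity $\lambda=R\mu$, then identify the limits of $M^{A,n}/\sqrt n$, $M^{B,n}/\sqrt n$, $M^{D,n}/\sqrt n$ by composing the primitives from (A5) with the now-deterministic time changes $\lambda_i(0)t$, $\mu_j(0)t$ and $D^n_j/n\to\mu_j(0)t$. The only (harmless) variation is that you send the argument of $\Gamma$ directly to zero using $(\lambda^n-R\mu^n)(n\cdot)/n\to 0$ on the event $\{\sup_{s\le t}|Z^n(s)|\le r\}$, whereas the paper passes through $\mathbb{C}$-tightness of $Q^n/n$ and identifies every limit point as the zero solution of $q=\Gamma\bigl(\int_0^{\cdot}(\lambda(q)-R\mu(q))\,ds\bigr)$.
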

\begin{proof}
From Lemma \ref{lem1}, $M^n(t)/n\to 0$ in probability uniformly over bounded intervals.
By (A2) and \eqref{eq:2}, for some $H'$, for all $n$ and $x$,
$  |a^n(nx)|\le H'n(1+|x|)$\,.
By \eqref{eq:10}, the sequence of processes
$(\inte{0}{t}(1/n)\ab{a}{n}{}(\ab{Q}{n}{}(s))ds,\,t\ge0)\,\,\,
\mbox{is }\,\, \mathbb{C}\mbox{-tight}.$ By \eqref{vec11}, the fact
that $M^n(t)/n\to0$ in probability {uniformly} on bounded intervals,
Prohorov's theorem, and the
continuity of the Skorohod map, the sequence of processes
$(Q^n(t)/n,\,t\ge0)$ is $\mathbb{C}$-tight and every its limit in
distribution $(q(t),\,t\ge0)$ satisfies the equation
\begin{equation*}
  q(t) ={\rm{\Gamma}}\left(\inte{0}{\cdot}(\lambda(q(s))-
R\mu(q(s))) ds\right)(t)\,.
\end{equation*}
Since by (A3), $\lambda(x)-R\mu(x)=0$, we must have that $q(t)=0$,
which implies that the {sequence $Q^n_i(t)/n$\,
tends} to zero as $n\ra\infty$ in probability uniformly on bounded
intervals.
Since $Y^n$ is expressed as a continuous function of $(Q^n,M^n)$, we have that
 $Y^n(t)/n\to0$ in probability uniformly
over bounded intervals, so by \eqref{eq:11}, for $i\in\K$,

\begin{equation}
  \label{eq:7}
          \frac{1}{n}
\inte{0}{t}\mu^n_i(Q^n(s))1_{\{Q^n_i(s)=0\}}ds \ra 0
 \,\mbox{ in probability as }\, n\ra\infty.
\end{equation}
We also have by (A3) that
\begin{subequations}
  \begin{align}
\label{eq:6}
      \frac{1}{n}
\inte{0}{t}\lambda^n_i(Q^n(s))\,ds \ra \lambda_i(0)t
 \,\mbox{ in probability as }\, n\ra\infty\\
\intertext{and}
    \label{eq:6a}
      \frac{1}{n}
\inte{0}{t}\mu^n_i(Q^n(s))\,ds \ra \mu_i(0)t
 \,\mbox{ in probability as }\, n\ra\infty\,.
  \end{align}
\end{subequations}

Since by (A5), $N^{D,n}_i(nt)/n\to t$ in probability as $n\to\infty$, by
\eqref{4.3}, {(A4)}, \eqref{eq:7}, and \eqref{eq:6a},
\begin{equation}
  \label{eq:4}
 \frac{D_i^n(t)}{n} \ra \mu_i(0)t
\,\mbox{ in probability as }\, n\ra\infty.
\end{equation}
The convergences in (A5), \eqref{eq:6}, (\ref{eq:6a}), and
\eqref{eq:4}
 imply if one
recalls the definitions in (\ref{eq:1a}), (\ref{eq:1b}),
and (\ref{eq:1c}) that the
$(M^{A,n}/\sqrt{n},M^{B,n}/\sqrt{n},M^{D,n}/\sqrt{n})$
converge in distribution to
$(M^A,M^B,M^D)$\,, where $M_i^A(t)=W^A_i(\lambda_i(0)t)$, $M_i^B(t)=
\sum_{j=1}^KW^\Phi_{ji}(\mu_j(0)t)$,
$M^D_i(t)=\sum_{j=1}^K(\delta_{ij}-p_{ji})W^D_j(\mu_j(0)t)$, so,
by (\ref{eq:1}) and \eqref{eq:3}, the $\overline{M}^{n}$
converge in distribution to $M$\,.
\end{proof}

\begin{proof}[\bf Proof of Theorem \ref{main}]
We note that by \eqref{2.7st},
\begin{equation}
  \label{eq:12}
  X^n(t)={\rm{\Gamma}}\left(X^n(0)+\inte{0}{\cdot}
\frac{1}{\sqrt{n}}\,\ab{a}{n}{}(\sqrt{n}\ab{X}{n}{}(s))ds
  +\overline{M}^n(\cdot)\right)(t), \quad \mbox{for } t\geq0.
\end{equation}
By the Lipschitz continuity of the {Skorohod} map,
(\ref{eq:2}), and (A2),
 for $T>0$ and suitable $H>0$,
\begin{multline*}
  \sup_{t\in[0,T]}|X^n(t)|\le |X^n(0)|+
L\inte{0}{t}
\frac{1}{\sqrt{n}}\,|\ab{a}{n}{}(\sqrt{n}\ab{X}{n}{}(s))|ds
  +\frac{1}{\sqrt{n}}\,\sup_{t\in[0,T]}|
 \ab{M}{n}{}(t)|\\
\le |X^n(0)|+
LH\inte{0}{t}(1+|X^n(s)|)\,ds
  +\frac{1}{\sqrt{n}}\,\sup_{t\in[0,T]}|
 \ab{M}{n}{}(t)|\,.
\end{multline*}
Gronwall's inequality, the convergence of the $X^n(0)$, and
Lemma \ref{lem2} yield
\begin{equation*}
  \lim_{r\to\infty}\limsup_{n\to\infty}
\mathbf{P}(\sup_{t\in[0,T]}|X^n(t)|>r)=0\,,
\end{equation*}
so, by (\ref{eq:2}) and (A4), the sequence of processes
$(\inte{0}{t}
\ab{a}{n}{}(\sqrt{n}\ab{X}{n}{}(s))/\sqrt{n}\,\,ds\,,t\ge0)$
is $\mathbb{C}$-tight.

By \eqref{eq:12}, the convergence of the $X^n(0)$, Lemma \ref{lem2}, (A4), Prohorov's theorem, and the continuity of the Skorohod map, the sequence of processes $(X^n(t),\,t\ge0)$ is $\mathbb{C}$-tight and every limit point $(\tilde{X}(t),\,t\ge0)$ for convergence in distribution satisfies the equation
\begin{equation*}
 \tilde X(t)={\rm{\Gamma}}\left(X(0)+\inte{0}{\cdot}
\,\ab{a}{}{}(\ab{\tilde X}{}{}(s))ds
  + \ab{M}{}{}(\cdot)\right)(t), \quad \mbox{for } t\geq0.
\end{equation*}
The uniqueness of a solution to the Skorohod problem implies that
 $\tilde{X}(t)=X(t)$\,.
\end{proof}
\section{Generalised Jackson networks with state-dependent rates}
\label{example}

In this section, we consider an application  to the setting of
 generalised Jackson networks in conventional scaling.
  Suppose as  given mutually independent
sequences of i.i.d. nonnegative random variables $\{u^i_j(n), i\geq1\}$,
$\{v^i_k(n), i\geq1\}$ for $j\in\mathcal J\subseteq \K$ and $k\in\K$.
For the $n$th network, the random variable $u^i_j(n)$ represents
 the $i$th exogenous interarrival time
 at station $j$,  while $v^i_k(n)$ is the $i$th service time at
 station $k$. The quantities $p_{ij}$ represent the
 probabilities of a job leaving station $i$  being routed directly
to station $j$,
 which are held constant. The routing decisions, interarrival and service
 times, and the initial queue length vector are mutually independent.

   We define \beqys && \mu_k^n = (\mathbf E[v^1_k(n)])^{-1}>0, \quad
   s_k^n=\mbox{\bf Var}(v^1_k(n))\geq0, \quad k\in\K, \quad \mbox{and} \\
   && \lambda_j^n = (\mathbf E[u^1_j(n)])^{-1}>0, \quad
   a_j^n=\mbox{\bf Var}(u^1_j(n))\geq0, \quad j\in\mathcal J, \eeqys with
   all of these terms assumed finite and the set $\mathcal J$
   nonempty. It is convenient to let $\lambda^n_j=1$ and
$a^n_j=0$ for $j\not\in \mathcal{J}$\,.

Let
$ \hat{N}^{{\hat A},n}_j(t)=\max\{i':\,\sum_{i=1}^{i'}u^i_j(n)\le t\}$ for
{$j\in\mathcal{J}$} and
$\hat{N}^{ {\hat {D}} ,n}_k(t)=\max\{i':\,\sum_{i=1}^{i'}v^i_k(n)\le t\}$ for $k\in\K$\,.
We may interpret the process $(\hat{N}^{{\hat A},n}_j(t),\,t\ge0)$ as a nominal
arrival process  and the random variables
$v^i_k(n)$ as the amounts of work needed to serve  the jobs.
Suppose that arrivals  are speeded up (or delayed) by a function
$\hat{\lambda}^n_i(x)$, where $i\in\mathcal{J}$,
and the service is performed at rate
$\hat{\mu}^n_k(x)$, where $k\in\K$,
 when the queue length vector is $x$.
As in Section \ref{mainresult}, we let
${\hat{N}^{\hat A,n}_i(t)=
\lfloor t\rfloor}$
and $\hat{\lambda}^n_i(x)=0$ for $i\not\in\mathcal{J}$.
In analogy
with (\ref{4})-\eqref{4.3} the queue lengths at the stations
at time $t$, which we
denote by $\hat{Q}^n_i(t)$, are assumed to satisfy the equations
  \begin{align*}
       \hat{Q}^n_i(t)=& \hat{Q}^n_i(0)
+ \hat{A}^n_i(t)+\hat{B}^n_i(t)-\hat{D}^n_i(t), \\
\hat{A}_i^n(t)=& \ab{\hat{N}}{\hat{A},n}{i}\left(\inte{0}{t}\ab{\hat{\lambda}}{n}{i}(\ab{\hat{Q}}{n}{}(s))ds\right), \\
\hat{B}_i^n(t)=& \sig{j=1}{K}\hat{\Phi}^n_{ji}\bigl(\hat{D}^n_j(t)\bigr),
\\ \hat{D}^n_i(t)=& \ab{\hat{N}}{\hat{D},n}{i}\left(\inte{0}{t}\ab{\hat\mu}{n}{i}(\ab{\hat{Q}}{n}{}(s))1_{\{\ab{\hat{Q}}{n}{i}(s)>0\}}ds
\right),
  \end{align*}
where
\begin{equation*}
  \hat{\Phi}^n_{ji}(m)=\sum_{l=1}^m \chi^n_{ji}(l),
\end{equation*}
with $\{(\chi^n_{ji}(l),\,i=1,2,\ldots,K),\,l=1,2,\ldots\}$ being
indicator random variables which are mutually independent for
different $j$ and $l$ and are such that
$\mathbf{P}(\chi^n_{ji}(l)=1)=p_{ji}$\,.

If we introduce the random variables $Q^n_i(t)=\hat{Q}^n_i(nt)$,
$A^n_i(t)=\hat{A}^n_i(nt)$, $B^n_i(t)=\hat{B}^n_i(nt)$,
$D^n_i(t)=\hat{D}^n_i(nt)$, $N^{A,n}_i(t)=\hat{N}^{\hat{A},n}_i(t/\lambda^n_i)$,
$N^{D,n}_i(t)=\hat{N}^{\hat{D},n}_i(t/\mu^n_i)$, and
$\Phi^n_{ji}(m)=\hat{\Phi}^n_{ji}(m)$, and functions
$\lambda^n_i(x)=n\lambda^n_i\hat{\lambda}^n_i(x)$ and
$\mu^n_i(x)=n\mu^n_i\hat{\mu}^n_i(x)$,
then we can see that they
satisfy equations \eqref{4}--\eqref{4.3}.
Condition (A0) holds as $N^{A,n}_i(t)/t\to 1$ and
$N^{D,n}_i(t)/t\to 1$ a.s. as $t\to\infty$\,.

If we also assume that $\hat{Q}^n(0)/\sqrt{n}\Ra X_0$, that, for
$\;k\in\K$
and $j\in\mathcal{J}$,
 \begin{align*} \mu_k^n\ra
\mu_k, \quad
s_k^n\ra s_k,\\
\lambda_j^n\ra\lambda_j,\quad a_j^n\ra a_j,
 \end{align*} as $n\ra\infty$\,,  and that
 \begin{equation*}\label{m2}\max_{k\in \K}\sup_{n\geq1}
\mathbf E(v^1_k(n))^{2+\eps}
+\max_{j\in \mathcal{J}}\sup_{n\geq1}
\mathbf E(u^1_j(n))^{2+\eps}  <
\infty \quad \mbox{for some }\,\, \eps>0,\end{equation*} then
condition (A5) holds with
$W^A_j=\sqrt{a_j}\lambda_j B^A_j$ for $j\in\mathcal{J}$,
$W^A_j(t)=0$ for $j\notin\mathcal{J}$, and
$W^D_k=\sqrt{s_k}\mu_kB^D_k$ for $k\in\K$\,, where
$B^A_j$ and $B^D_k$ are  independent standard Brownian motions,
with $\Phi_i$ being a $K$-dimensional Brownian motion with covariance matrix
$\mathbf{E}\Phi_{ik}(t)\Phi_{ij}(t)=(p_{ij}\delta_{jk}-p_{ij}p_{ik})t$,
 and with processes
$B^A_j$, $B^D_k$, and $\Phi_i$ being mutually independent.

Let us assume that the following versions of conditions (A2)--(A4) hold:
\begin{itemize}
 \item[$\widehat{(A2)}$] For each  $i\in\K$,
$$ \sup_{n\in\NN}\sup_{x\in
  S}\left(\frac{\hat{\lambda}^n_i(nx)}{1+|x|}
+ \frac{\hat{\mu}_i^n(nx)}{1+|x|} \right)<\infty,$$
\item[$\widehat{(A3)}$] There exist continuous
functions $\hat{\lambda}_i(x)$ and $ \hat{\mu}_i(x)$
such that $${\hat\lambda}^n_i(nx)\ra \hat{\lambda}_i(x),\quad
{\hat\mu}^n_i(nx)\ra \hat{\mu}_i(x)$$ uniformly on compact
subsets of $S$, as $n\ra\infty$. Furthermore, for $x \in S$,
$$\overline{\lambda}(x)-R\overline{\mu}(x)=0,$$
where $\overline{\lambda}_i(x)=\lambda_i\hat{\lambda}_i(x)$ and
$\overline{\mu}_i(x)=\mu_i\hat{\mu}_i(x)$\,,
\item[$\widehat{(A4)}$]
There exists a {Lipschitz}-continuous function $\hat{a}(x)$ such that
\begin{equation*}
\sqrt{n}(\overline{\lambda}^n(\sqrt{n}x)-
R\overline{\mu}^n(\sqrt{n}x))\to \hat{a}(x)
\end{equation*}
as $n\to\infty$ uniformly on compact subsets of $S$,
where $\overline{\lambda}^n_i(x)=\lambda^n_i\hat{\lambda}^n_i(x)$ and
$\overline{\mu}^n_i(x)=\mu^n_i\hat{\mu}^n_i(x)$\,{.}
\end{itemize}

Then the process $M$ in \eqref{sde} and \eqref{eq:3} is a $K$-dimensional
Brownian motion with
covariance matrix
 $\mathcal A$
which has  entries \begin{equation*}\label{a1} \mathcal A_{ii} =
\hat{\lambda}_i(0)\lambda_i^3
  a_i + \hat{\mu}_i(0)\mu_i^3 s_i(1-2p_{ii}) +
\sig{j=1}{K}\hat{\mu}_j(0)\mu_j p_{ji} (1-p_{ji} +
  p_{ji}\mu_j^2 s_j)\quad \mbox{for }\, i\in\K,  \end{equation*}
and \begin{equation*}\label{a2} \mathcal A_{ij}= -\left[
    \hat{\mu}_i(0)\mu_i^3
    s_ip_{ij} + \hat{\mu}_j(0)\mu_j^3 s_jp_{ji}
+\sig{k=1}{K} \hat{\mu}_k(0)\mu_kp_{ki} p_{kj}
    (1-\mu_k^2 s_k)\right] \quad \mbox{for }\, 1\leq i<j\leq
  K. \end{equation*}

An application of Theorem \ref{main} yields the following result.
\begin{cor}\label{renewal} If, in addition to the assumed hypotheses,
  condition  (A1) holds,
then the processes $(\hat{Q}^n(nt)/\sqrt{n},\,t\ge0)$ converge in
  distribution to the process $(X(t),\,t\ge0)$ with
  \begin{equation*}
    X(t)=\Gamma\Bigl(X_0 +   \inte{0}{\cdot}\ab{\hat a}{}{}(\ab{X}{}{}(s))ds +
 \mathcal{A}^{1/2}B(\cdot)\bigr)(t),
  \end{equation*}
where $B(\cdot)$ is a $K$-dimensional standard Brownian motion.
 \end{cor}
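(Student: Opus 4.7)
The plan is to recast the system as an instance of the model of Section \ref{model} under the substitutions $Q^n_i(t)=\hat Q^n_i(nt)$, $\lambda^n_i(x)=n\lambda^n_i\hat\lambda^n_i(x)$, $\mu^n_i(x)=n\mu^n_i\hat\mu^n_i(x)$, $N^{A,n}_i(t)=\hat N^{\hat A,n}_i(t/\lambda^n_i)$, $N^{D,n}_i(t)=\hat N^{\hat D,n}_i(t/\mu^n_i)$ given before the corollary, to verify conditions (A0)--(A5), and then to apply Theorem \ref{main}. Because $X^n_i(t)=Q^n_i(t)/\sqrt n=\hat Q^n_i(nt)/\sqrt n$, this will give the desired functional limit as soon as we identify the drift $a$ of \eqref{sde} as $\hat a$ and the driving martingale $M$ of \eqref{eq:3} as $\mathcal A^{1/2}B$ in distribution.

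First I would check that the substituted processes satisfy \eqref{4}--\eqref{4.3}; this is a direct computation from the definitions. Condition (A1) is an explicit hypothesis of the corollary, and (A0) is noted in the text to follow from the elementary renewal theorem applied to $\hat N^{\hat A,n}_i$ and $\hat N^{\hat D,n}_i$. Conditions (A2), (A3), and (A4) reduce to their hatted analogues through the identities $\lambda^n(nx)/n=\lambda^n\hat\lambda^n(nx)$ and the analogue for $\mu$, combined with $\lambda^n_j\to\lambda_j$ and $\mu^n_k\to\mu_k$. This yields $\lambda_i(x)=\overline\lambda_i(x)$ and $\mu_i(x)=\overline\mu_i(x)$, so the balance $\lambda(x)-R\mu(x)=0$ of (A3) is precisely $\widehat{(A3)}$, and $(1/\sqrt n)(\lambda^n(\sqrt n x)-R\mu^n(\sqrt n x))=\sqrt n(\overline\lambda^n(\sqrt n x)-R\overline\mu^n(\sqrt n x))\to\hat a(x)$, so in \eqref{sde} one has $a=\hat a$.

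The substantive step is (A5). For each $j\in\mathcal J$, the $u^i_j(n)$ are i.i.d.\ with mean $1/\lambda^n_j\to 1/\lambda_j$, variance $a^n_j\to a_j$, and uniformly bounded $(2+\varepsilon)$-moments; the triangular-array renewal FCLT therefore gives
\begin{equation*}
\frac{\hat N^{\hat A,n}_j(ns)-\lambda^n_j ns}{\sqrt n}\ \Rightarrow\ \sqrt{a_j\lambda_j^3}\,B^A_j(s),
\end{equation*}
and composing with the deterministic time change $s=t/\lambda^n_j\to t/\lambda_j$ yields $\overline N^{A,n}_j\Rightarrow\sqrt{a_j}\,\lambda_j\,B^A_j=W^A_j$, via the Brownian scaling $B^A_j(t/\lambda_j)\stackrel{d}{=}\lambda_j^{-1/2}B^A_j(t)$. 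The analogous argument gives $\overline N^{D,n}_k\Rightarrow W^D_k=\sqrt{s_k}\,\mu_k\,B^D_k$. The multinomial Donsker theorem, applied to the independent Bernoulli vectors $(\chi^n_{ji}(l))_{i\in\K}$, yields $\overline\Phi^n_j\Rightarrow\Phi_j$ with the stated covariance, and joint convergence together with $\overline Q^n(0)\Rightarrow X_0$ follows from the assumed mutual independence of the primitives.

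Theorem \ref{main} now gives $X^n\Rightarrow X$ with $X$ the unique solution of \eqref{sde}. Since $W^A$, $W^D$, $W^\Phi$ are independent continuous Gaussian martingales, so is $M$, and hence $M=\mathcal A^{1/2}B$ in distribution by L\'evy's characterization for some covariance matrix $\mathcal A$. A bilinear expansion of \eqref{eq:3} using $\mathbf E W^A_j(s)W^A_j(t)=a_j\lambda_j^2(s\wedge t)$, $\mathbf E W^D_k(s)W^D_k(t)=s_k\mu_k^2(s\wedge t)$, $\mathbf E W^\Phi_{jk}(s)W^\Phi_{ji}(t)=(p_{jk}\delta_{ki}-p_{ji}p_{jk})(s\wedge t)$, together with $\lambda_j(0)=\lambda_j\hat\lambda_j(0)$ and $\mu_j(0)=\mu_j\hat\mu_j(0)$, reproduces the entries $\mathcal A_{ii}$ and $\mathcal A_{ij}$ displayed before the corollary. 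The two places where care is needed are the joint renewal FCLT with uniformity in $n$ (ensured by the uniform $(2+\varepsilon)$-moment bound) and the off-diagonal bookkeeping for $\mathcal A_{ij}$; everything else is mechanical translation.
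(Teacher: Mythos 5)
Your proposal is correct and follows essentially the same route as the paper: the paper's proof of Corollary \ref{renewal} consists precisely of the change of variables $Q^n_i(t)=\hat Q^n_i(nt)$ with the rescaled rates and primitives, the verification that (A0), (A2)--(A4) reduce to their hatted counterparts and that (A5) holds via the renewal FCLT and Donsker's theorem for the routing indicators, followed by an application of Theorem \ref{main} and the covariance bookkeeping for $\mathcal A$. Your identification $a=\hat a$ and the variance computations (e.g.\ $\hat\lambda_i(0)\lambda_i^3a_i$ from $W^A_i(\lambda_i(0)t)$ with $\lambda_i(0)=\lambda_i\hat\lambda_i(0)$) match the paper's displayed entries of $\mathcal A$.
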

 \begin{remark}
   The conditions on the asymptotics of the arrival and service rates essentially boil
   down to the assumptions that
 the following expansions
   hold:
$\overline{\lambda}^n(x)=\overline{\lambda}_1(x/n)+
\overline{\lambda}_2(x/\sqrt n)/\sqrt{n}$ and
$\overline{\mu}^n(x)=\overline{\mu}_1(x/n)+
\overline{\mu}_2(x/\sqrt n)/\sqrt{n}$ with suitable functions
$\overline{\lambda}_1$, $\overline{\lambda}_2$,
$\overline{\mu}_1$, and $\overline{\mu}_2$.
 \end{remark}
 \begin{remark}
    If, in addition, the assumption of unit rates is made,
that is $\hat{\lambda}^n_j(x)=1$ for $j\in\mathcal{J}$
and $\hat{\mu}^n_k(x)=1$ for $k\in\K$,
then the limit process  is a $K$-dimensional reflected Brownian
motion on the positive orthant with infinitesimal drift $\hat{a}(0)$ and
covariance matrix $\mathcal A$, and the  reflection matrix $R=I-P^T$,
as in Theorem 1 of Reiman \cite{Reiman:1984}.
 \end{remark}
\begin{remark}
In order to extend  applicability,
one may consider independent sequences of weakly dependent random
variables $\{u^i_j(n), i\geq1\}$, $\{v^i_k(n), i\geq1\}$ for
$j\in\mathcal J\subseteq \K$ and $k\in\K$.  Under suitable moment
and mixing conditions which imply  the invariance principle, cf.,
e.g.,
Herrndorf \cite{Herrndorf1984}, Peligrad \cite{Peligrad1986}, Jacod
and Shiryaev \cite{MR2003j:60001},  Corollary \ref{renewal} continues to hold.
\end{remark}

\renewcommand{\theequation}{{$\mathbb{A}$\arabic{equation}}}
\setcounter{equation}{0}  
\section*{Appendix}
\begin{proof}[\bf Proof of Lemma \ref{exist}]
The proof is an adaptation of the one in Puhalskii and Simon
\cite[Lemma 2.1]{PuhSim12} and employs the approach of
Ethier and Kurtz \cite[Theorem 4.1,
p.327]{EthKur86}.
Let
\begin{align*}
  \theta^n(x)&=1+\sum_{i=1}^K(\mu_i^n(x)+\lambda_i^n(x))\,,\\
\hat{\mu}_i^n(x)&=\frac{\mu^n_i(x)}{\theta^n(x)},\\
\hat{\lambda}_i^n(x)&=\frac{\lambda^n_i(x)}{\theta^n(x)}\,{,}
\end{align*}
and
\begin{equation*}
  \tau^n(t)=\inf\{s:\,   \int_0^s\theta^n(Q^n(u))\,du>t\}\,.
\end{equation*}
We note that $\tau^n(t)$ is finite-valued, differentiable,
$d\tau^n(t)/dt=1/\theta^n(Q^n(\tau^n(t))$\, and $\tau^n(t)\to\infty$
as $t\to\infty$\,.
One can see that if the process $Q^n$ satisfies a.s. the equations
\begin{align}
  \label{eq:13}
\notag          Q^{n}_{i}(t)&= Q^n_{i}(0)+
\ab{N}{A,n}{i}\Bigl(\inte{0}{t}\ab{\lambda}{n}{i}(\ab{Q}{n}{}(s))ds\Bigr)\\&\notag
+\sig{j=1}{K}\Phi^n_{ji}\Bigl(\ab{N}{D,n}{j}\Bigl(\inte{0}{t}\ab{\mu}{n}{j}(\ab{Q}{n}{}(s))1_{\{\ab{Q}{n}{j}(s)>0\}}ds
\Bigr)\Bigr)
\\&-\ab{N}{D,n}{i}\Bigl(\inte{0}{t}\ab{\mu}{n}{i}(\ab{Q}{n}{}(s))1_{\{\ab{Q}{n}{i}(s)>0\}}ds
\Bigr)\,,t\ge0,
\end{align}
then the process $\hat{Q}^n=(\hat{Q}^n(t)\,,t\ge0)$
defined by $\hat{Q}^n(t)=Q^n(\tau^n(t))$ satisfies a.s. the equations
\begin{align}
  \label{eq:16}
\notag          \hat{Q}^{n}_{i}(t)&= \hat{Q}^n_{i}(0)+
\ab{N}{A,n}{i}\Bigl(\inte{0}{t}
\ab{\hat\lambda}{n}{i}(\ab{\hat{Q}}{n}{}(s))
ds\Bigr)\\&\notag
+\sig{j=1}{K}\Phi^n_{ji}\Bigl(\ab{N}{D,n}{j}
\Bigl(\inte{0}{t}\ab{\hat\mu}{n}{j}(\ab{\hat{Q}}{n}{}(s))1_{\{\ab{\hat{Q}}{n}{j}(s)>0\}}ds
\Bigr)\Bigr)
\\&-\ab{N}{D,n}{i}\Bigl(\inte{0}{t}\ab{\hat\mu}{n}{i}(\ab{\hat{Q}}{n}{}(s))1_{\{\ab{\hat{Q}}{n}{i}(s)>0\}}ds
\Bigr)\,,t\ge0.
\end{align}
On the other hand, suppose a $\Z_+^K$-valued process
$\hat{Q}^n$ satisfies a.s. \eqref{eq:16} and let
\begin{equation*}
\hat{\tau}^n(t)=\inf\{s:\,
\int_0^s\frac{1}{\theta^n(\hat{Q}^n(u))}\,du>t\}\,.
\end{equation*}
We show that $\hat{\tau}^n(t)$ is well defined for all $t$ a.s.
Since by condition (A2), for a suitable constant $L^n$, $\theta^n(x)\le L^n(1+x)$, we have
that
  \begin{equation}
    \label{eq:14}
  \int_0^s\frac{1}{\theta^n(\hat{Q}^n(u))}\,du\ge
\frac{1}{L^n}\,\int_0^s\frac{1}{1+\sum_{i=1}^K\hat{Q}_i^n(u)}\,du
\ge \frac{1}{L^n}\,\int_0^s\frac{1}{1+\sum_{i=1}^K\hat{Q}_i^n(0)+\sum_{i=1}^KN^{A,n}_i(u)}\,du,
\end{equation}
where the latter inequality uses  the fact that by \eqref{eq:16}
\begin{equation*}
\sum_{i=1}^K\hat{Q}_i^n(t)\le \sum_{i=1}^K\hat{Q}_i^n(0)+
\sum_{i=1}^KN^{A,n}_i\bigl(\int_0^s\hat{\lambda}^n_i(\hat{Q}^n_i(u))\,du\bigr)
\end{equation*}
and that $\hat{\lambda}^n_i(x)\le 1$\,. Since
$\limsup_{t\to\infty}N^{A,n}_i(t)/t<\infty $ a.s., the rightmost integral in \eqref{eq:14} tends to
infinity as $t\to\infty$ a.s., so does the leftmost integral, which proves
the claim.
In addition, $\hat{\tau}^n(t)$ is  differentiable,
$d\hat{\tau}^n(t)/dt=\theta^n(\hat{Q}^n(\hat\tau^n(t))$\,
and $\hat\tau^n(t)\to\infty$
as $t\to\infty$ a.s. It follows that $Q^n(t)=\hat{Q}^n(\hat{\tau}^n(t))$
satisfies \eqref{eq:13} a.s.

Thus, existence and uniqueness for (\ref{eq:13}) holds if and only if
existence and uniqueness holds for (\ref{eq:16}).
The existence and uniqueness for \eqref{eq:16} follows by recursion on
the jump times of $\hat{Q}^n$.
In some more detail, we define  the processes $\hat{Q}^{n,\ell}=(\hat{Q}^{n,\ell}(t),\,t\ge0)$ with
$\hat{Q}^{n,\ell}(t)=(\hat{Q}^{n,\ell}_i(t),\,i=1,2,\ldots,K)$ by
 $\hat{Q}^{n,0}_{i}(t)=\hat{Q}^n_{i}(0)$ and, for $\ell=1,2,\ldots$,  by
 \begin{align*}
\notag          \hat{Q}^{n,\ell}_{i}(t)&= \hat{Q}^n_{i}(0)+
\ab{N}{A,n}{i}\Bigl(\inte{0}{t}\ab{\hat{\lambda}}{n}{i}(\ab{\hat{Q}}{n,\ell-1}{}(s))ds\Bigr)\\&\notag
+\sig{j=1}{K}\Phi^n_{ji}\Bigl(\ab{N}{D,n}{j}\Bigl(\inte{0}{t}\ab{\hat{\mu}}{n}{j}(\ab{\hat{Q}}{n,\ell-1}{}(s))1_{\{\ab{\hat{Q}}{n,\ell-1}{j}(s)>0\}}ds
\Bigr)\Bigr)
\\&-\ab{N}{D,n}{i}\Bigl(\inte{0}{t}\ab{\hat{\mu}}{n}{i}(\ab{\hat{Q}}{n,\ell-1}{}(s))1_{\{\ab{\hat{Q}}{n,\ell-1}{i}(s)>0\}}ds
\Bigr)\,.
\end{align*}
Let $\tau^{n,\ell}$ represent the time epoch of the $\ell$th jump of
$\hat{Q}^{n,\ell}$ with $\tau^{n,0}=0$\,.
One can see that $\hat{Q}^{n,1}(t)=\hat{Q}^{n,0}(0)$ if
$t<\tau^{n,1}$\,. It follows that
 $(\hat{Q}^{n,1}(t),\,t\ge0)$ and $(\hat{Q}^{n,2}(t),\,t\ge0)$
experience the first jump at the same time epoch and the jump size is
the same for both processes, so $\tau^{n,1}<\tau^{n,2}$ and
$\hat{Q}^{n,1}(t\wedge
\tau^{n,1})=\hat{Q}^{n,2}(t)$ for $t<\tau^{n,2}$\,.
We define $\hat{Q}^n(t)=\hat{Q}^n(0)$ for $t<\tau^{n,1}$ and
$\hat{Q}^n(t)=\hat{Q}^{n,1}(t)$ for $\tau^{n,1}\le t<\tau^{n,2}$\,.
Similarly, for an arbitrary $\ell\in\NN$,  we obtain
that $\tau^{n,\ell}<\tau^{n,\ell+1}$ and
$\hat{Q}^{n,\ell}(t\wedge
\tau^{n,\ell})=\hat{Q}^{n,\ell+1}(t)$ for $t<\tau^{n,\ell+1}$\,.
We let
$\hat{Q}^n(t)=\hat{Q}^{n,\ell}(t)$ for $\tau^{n,\ell}\le
t<\tau^{n,\ell+1}$\,. The process $\hat{Q}^n$ is defined consistently
for $t\in \cup_{\ell=1}^\infty[\tau^{n,\ell-1},\tau^{n,\ell})$\,. If
$\tau^{n,\ell+1}=\infty$ for some $\ell$, then we let
$\hat{Q}^n(t)=\hat{Q}^{n,\ell}(t)$ for all $t\ge\tau^{n,\ell}$\,.

Suppose that $\tau^{n,\ell}<\infty$ for all $\ell$\,. Then
 $\hat{Q}^n(t)$  has been defined
for all
$t<\tau^{n,\infty}=\lim_{\ell\to\infty}\tau^{n,\ell}$ and satisfies
\eqref{eq:16}
for these values of $t$\,.
We now show that
$\tau^{n,\infty}=\infty$\,. The set of the time epochs of the jumps of
$\hat{Q}^n$ is a subset of the set of the time epochs of the jumps of
the process $\tilde{Q}^n=(\tilde{Q}^n(t),\,t\ge0)$, where
\begin{multline*}
  \tilde{Q}^n(t)=
\sum_{i=1}^K\biggl(\ab{N}{A,n}{i}\Bigl(\inte{0}{t}\ab{\hat{\lambda}}{n}{i}(\ab{\hat{Q}}{n,\ell-1}{}(s))ds\Bigr)
+\sig{j=1}{K}\Phi^n_{ji}
\Bigl(\ab{N}{D,n}{j}\Bigl(\inte{0}{t}\ab{\hat{\mu}}{n}{j}(\ab{\hat{Q}}{n,\ell-1}{}(s))1_{\{\ab{\hat{Q}}{n,\ell-1}{j}(s)>0\}}ds
\Bigr)\Bigr)
\\+\ab{N}{D,n}{i}\Bigl(\inte{0}{t}\ab{\hat{\mu}}{n}{i}(\ab{\hat{Q}}{n,\ell-1}{}(s))1_{\{\ab{\hat{Q}}{n,\ell-1}{i}(s)>0\}}ds
\Bigr)\biggr)\,.
\end{multline*}
Since the process $\hat{Q}^n$ has infinitely many jumps, so does the
process
$\tilde{Q}^n$\,.
Since $\hat{\lambda}^n_i(x)\le1$, $\mu^n_i(x)\le 1$
and $\Phi^n_{ji}(m_1)-\Phi^n_{ji}(m_2)\le m_1-m_2$
for $m_1\ge m_2$, the lengths of time between the jumps of
$\tilde{Q}^n$ are not less than the lengths of time between the
corresponding jumps of the process
$\bigl(\sum_{i=1}^K\ab{N}{A,n}{i}(t)
+\sig{i=1}{K}\ab{N}{D,n}{i}(t),\,t\ge0\bigr)\,.$ The process
$\bigl(\sum_{i=1}^K\ab{N}{A,n}{i}(t)
+\sig{i=1}{K}\ab{N}{D,n}{i}(t),\,t\ge0\bigr)$ having {infinitely} many
jumps, the time
epochs of the jumps of $\bigl(\sum_{i=1}^K\ab{N}{A,n}{i}(t)
+\sig{i=1}{K}\ab{N}{D,n}{i}(t),\,t\ge0\bigr)$ tend to infinity as the
jump numbers tend to infinity.  Thus, $\tau^{n,\infty}=\infty$ a.s.

The provided construction shows that $Q^n$ is a suitably
measurable function of
$N^{A,n}$, $N^{D,n}$, and $\Phi^n$, so it
is a strong solution.
We have  proved the existence of a strong solution to \eqref{eq:16}\,. A
similar argument establishes uniqueness.
\end{proof}

\def\cprime{$'$} \def\cprime{$'$} \def\cprime{$'$} \def\cprime{$'$}
  \def\polhk#1{\setbox0=\hbox{#1}{\ooalign{\hidewidth
  \lower1.5ex\hbox{`}\hidewidth\crcr\unhbox0}}} \def\cprime{$'$}
  \def\cprime{$'$}

\end{document}